\documentclass[10pt]{article}
\usepackage{graphicx,amsthm}
\usepackage{amsfonts,amssymb,amscd,amsmath,enumerate,verbatim,latexsym,epsfig,amsfonts}

\newtheorem{Theorem}{Theorem}[section]
\newtheorem{Lemma}[Theorem]{Lemma}
\newtheorem{Corollary}[Theorem]{Corollary}
\newtheorem{Proposition}[Theorem]{Proposition}

\newtheorem{Example}[Theorem]{Example}

\newtheorem{Definition}[Theorem]{Definition}

\newtheorem{Question}[Theorem]{Question}

\pagestyle{plain} \setlength{\parindent}{0mm}
\setlength{\parskip}{7pt} \setlength{\topmargin}{0.2in}
\setlength{\textheight}{18.1cm} \setlength{\textwidth}{11.4cm}


\begin{document}
\title{Power Graph and Exchange Property for Resolving Sets}
\author{U. Ali\thanks{Corresponding author.}, G. Abbas, S. A. Bokhary
\vspace{4mm}\\
\normalsize  Centre for Advanced Studies in Pure and Applied Mathematics,\\\normalsize
Bahauddin Zakariya University, Multan, Pakistan\\\normalsize
{\tt uali@bzu.edu.pk, sihtsham@gmail.com, k\_abbas2008@yahoo.com}}

\date{}
\maketitle


\begin{abstract}
 A formula for computing the metric dimension of a simple graph, having no singleton twin, is given. A sufficient condition for a simple graph to have the exchange property, for resolving sets, is found. Some families
of power graphs of finite groups, having this exchange property, are
identified. The metric dimension of the power graph of a dihedral group is also
computed.
\end{abstract}

\textit{AMS Subject Classification Number}: 05C25, 05C12.\\

\textit{Keywords: Power graph, metric dimension, basis, resolving set, involution}\\

\section{Introduction}
 Let $G$ be a finite group. An \emph{undirected power graph}
$\mathcal{P}_{G}$, associated to $G$, is a graph whose
 vertices are the elements of $G$, and
there is an edge between two vertices $x$ and $y$ if either $x^m=y$
or $y^m=x$, for some positive integer $m$. The power digraph of $G$ is the
digraph $\overrightarrow{\mathcal{P}_{G}}$ with the vertex set $G$, and there is an arc from vertex
$x$ to $y$ if $x^m=y$, for some positive integer $m$.
The power digraphs were considered in \cite{Kelarev1, Kelarev2,
Kelarev3}. Motivated by this, Chakrabarty, Ghosh, and Sen (see
\cite{Chakrabarty}) studied undirected power graphs of
semigroups. Recently, many interesting results on the power graph of a finite group
have been obtained (see \cite{Cameron}, \cite{Cameron1}, \cite{Mirzargar},
\cite{Moghaddamfar}, \cite{Chelvam}). A power graph is always connected. For other results and open questions, we refer the survey~\cite{Abawajy}.

Let $ \Gamma$ be a finite, simple, and connected
graph with vertex set $V(\Gamma)$ and edge set $ E(\Gamma)$. The \emph{distance} $d_{\Gamma}(u,v)$ between two vertices
$u,v\in V(\Gamma)$ is the length of the shortest path between them.
Let $W=\{w_{1},w_{2},\dots ,$ $w_{k}\}$ be an ordered set of
vertices of $\Gamma$, and let $v\in V(\Gamma)$. Then, the
\emph{representation} of $v$ with respect to $W$ is the
$k$-tuple $(d_{\Gamma}(v,w_{1}),d_{\Gamma}(v,w_{2}),\ldots
,d_{\Gamma}(v,w_{k}))$. Two vertices $u,v\in V(\Gamma)$ are resolved by $W$ if they have different representations.
$W$ is \textit{resolving set} or \emph{locating set} if every
vertex of ${\Gamma}$ is, uniquely, identified by its distances from
the vertices of $W$. Thus, in a resolving set, every vertex of
${\Gamma}$ has distinct representations. A
resolving set of minimum cardinality is called a \textit{basis} for
${\Gamma}$. The cardinality of such a resolving set is called the \emph{metric dimension} of $\Gamma$, and is denoted by $\beta(\Gamma)$ (see \cite{Buczkowski}, \cite{Chartrand3}, \cite{Slater1}, \cite{Imran2}, \cite{tomescu},
\cite{jav}). A resolving set is called \emph{minimal} if it contains no
resolving set as a proper subset. As an application, S. Khuller ~\cite{khuller} considered the metric dimension and basis of a connected
graph in robot navigation problems.

Whenever $W_1$ and $W_2$ are any two minimal resolving sets for
$\Gamma$, and for every $u\in W_1$ there is a vertex $v\in W_2$ such that
$(W_1\setminus\{u\})\cup\{v\}$ is also a minimal resolving set. Then, Resolving sets are said to have \emph{the exchange property} in the graph
$\Gamma$ (for detail, see~\cite{Boutin}). All the graphs considered in this paper are finite, simple, and connected. Also, all the groups considered are finite. Furthermore, the
exchange property of a graph $\Gamma$ always means the property for resolving sets.

The \emph{open neighborhood} of a vertex $u\in V(\Gamma)$, denoted by $N(u)$, is the set
$$\{v\in V(\Gamma): d_{\Gamma}(u,v)=1\},$$
and the \emph{closed neighborhood} of $u\in V(\Gamma)$, denoted by
$N[u]$, is the set
$$\{v\in V(\Gamma): d_{\Gamma}(u,v)=1\}\cup\{u\}.$$
The two vertices $u$ and $v$ in a graph $\Gamma$ are called twins, denoted by $u\equiv v$, if either $N[u]=N[v]$ or $N(u)=N(v)$. The relation $\equiv$ is an
 equivalence relation (see~\cite{Hernando}). Also, $d_{\Gamma}(u_1,w)=d_{\Gamma}(u_2,w)$ for $u_1\equiv u_2$, and for all $w\in V(\Gamma)$. Let $\overline{u}$ denote the twin class of $u$ with respect to the relation $``\equiv"$, and let $\mathcal{U}(\Gamma)=\{{\overline{u}|u \in \Gamma}\}$ be the set of all such twin classes.
  \begin{Definition}
A vertex $u$ is called singleton twin if $\overline{u}=\{u\}$.
  \end{Definition}
Our first result give a formula to compute the metric dimension of a graph, having no singleton twin.
\begin{Theorem}\label{alsinex}
Let $\Gamma$ be a graph, having no singleton twin. Let there are $n$ non-singleton twins, each of size $m_i$. Then, $$\beta(\Gamma)=\sum_{i=1}^n m_i-n.$$
Moreover, every minimal resolving set is a basis of the graph $\Gamma$.
\end{Theorem}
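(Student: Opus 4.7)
The plan is to sandwich $\beta(\Gamma)$ between matching bounds and then argue minimality separately.

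For the lower bound, I would start from the observation recorded earlier in the paper that twins have identical distances to all other vertices: if $u_1 \equiv u_2$ and $w \neq u_1, u_2$, then $d_\Gamma(u_1,w) = d_\Gamma(u_2,w)$. Consequently, if some twin class $\overline{u}$ of size $m_i$ contributed fewer than $m_i - 1$ vertices to a resolving set $W$, two vertices of $\overline{u}$ would lie outside $W$ and receive identical representations. Hence every resolving set satisfies $|W \cap \overline{u}| \geq m_i - 1$ for each class, giving $\beta(\Gamma) \geq \sum_{i=1}^n (m_i - 1) = \sum m_i - n$.

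For the upper bound I would exhibit a resolving set of exactly this size: pick $W$ to contain all but one vertex $v_i$ from each twin class. Vertices in $W$ are resolved by themselves, so only the pairs $(v_i, v_j)$ with $i \neq j$ require checking. Since $v_i$ and $v_j$ lie in different twin classes, $v_i \not\equiv v_j$. Here I would invoke the standard consequence of the twin definition: if $d_\Gamma(v_i,w) = d_\Gamma(v_j,w)$ for every $w \notin \{v_i,v_j\}$, then either $N(v_i)=N(v_j)$ or $N[v_i]=N[v_j]$, contradicting $v_i \not\equiv v_j$. Thus some $w \neq v_i,v_j$ separates them; if $w \in W$ we are done, and otherwise $w = v_k$ for $k \neq i,j$, in which case any element $w' \in \overline{v_k} \setminus \{v_k\}$ lies in $W$ and by the twin property also satisfies $d_\Gamma(v_i,w') \neq d_\Gamma(v_j,w')$. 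This yields $\beta(\Gamma) = \sum m_i - n$.

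For the moreover statement, I would show that no minimal resolving set can fully contain any twin class. Suppose $W$ is a resolving set with $\overline{v} \subseteq W$ for some class, and let $u \in \overline{v}$; I claim $W' := W \setminus \{u\}$ still resolves. Pairs $(x,y)$ with $\{x,y\} \not\ni u$ are resolved by whatever vertex of $W$ originally resolved them (possibly a different one if that vertex was $u$, handled below). A pair $(u,y)$ with $y \in W$ is resolved by $y$ itself. The only subtle case is $(u,y)$ with $y \notin W$: then $y$ lies in some twin class $\overline{v_k}$ distinct from $\overline{v}$ (because $\overline{v} \subseteq W$), so $y \not\equiv u$, and the same argument as in the upper bound produces a discriminator in $\overline{v_k} \setminus \{y\} \subseteq W'$. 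Hence $W'$ resolves, so $W$ was not minimal. Combined with the lower bound, every minimal resolving set has exactly $m_i - 1$ vertices in each class and therefore cardinality $\beta(\Gamma)$, i.e.\ is a basis.

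The main obstacle is the minimality argument, specifically the case $(u,y)$ with $y \notin W$: one must verify that a discriminator for $u$ and $y$ exists inside $W'$ and not merely inside the original $W$. The trick is to exploit the structure of $V(\Gamma) \setminus W$ (which, outside $\overline{v}$, contains at most one vertex per twin class) together with the twin property to transfer any discriminator into $W'$. Everything else is a short deduction from the twin equivalence.
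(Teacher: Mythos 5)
Your proof is correct and follows essentially the same route as the paper: the lower bound via the fact that any resolving set must contain all but one vertex of each twin class, the upper bound by exhibiting the complementary transversal as a resolving set, and the ``moreover'' part by showing a minimal resolving set cannot contain an entire twin class. You are in fact somewhat more careful than the paper at the step of transferring a discriminator lying outside $W$ to a twin of it inside $W$, which the paper's proof leaves implicit.
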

Our second result provides a sufficient condition for a graph to have
the exchange property.
\begin{Theorem}\label{nosinex}
 A graph $\Gamma$, having no singleton twin, has the exchange property.
\end{Theorem}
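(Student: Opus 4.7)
The plan is to use Theorem~\ref{alsinex} to reduce the exchange property to a short counting argument on twin classes. The key preparatory step is to upgrade Theorem~\ref{alsinex} to the structural characterization: a set $W\subseteq V(\Gamma)$ is a minimal resolving set if and only if $|W\cap\overline{u}|=|\overline{u}|-1$ for every twin class $\overline{u}$. The forward direction is immediate: since twin vertices are equidistant from every other vertex, each class $\overline u$ forces at least $|\overline u|-1$ elements of $W$, and the sum of these lower bounds is $\beta(\Gamma)=\sum_{i=1}^{n}(m_i-1)$ by Theorem~\ref{alsinex}, so equality must hold in every class. For the converse, within-class distinguishment is automatic from the chosen representatives, while a non-twin pair $x,y$ is distinguished by any $z\in N[x]\setminus N[y]$; the no-singleton-twin hypothesis forces $|\overline z|\geq 2$, so $W$ contains some $z'\equiv z$, and $d(z',x)=d(z,x)\neq d(z,y)=d(z',y)$ resolves the pair.

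With this characterization in hand, the exchange argument is a one-line counting check. Let $W_1,W_2$ be any two minimal resolving sets and fix $u\in W_1$ with twin class $\overline u$. Each of $W_1$ and $W_2$ omits exactly one vertex of $\overline u$; denote these by $u^\ast$ and $u^{\ast\ast}$, respectively. If $u\in W_2$, simply take $v=u$, so that $(W_1\setminus\{u\})\cup\{v\}=W_1$ is minimal. Otherwise $u=u^{\ast\ast}$, and since $u^\ast\notin W_1$ implies $u^\ast\neq u$, it follows that $u^\ast\in W_2$; take $v=u^\ast$. Then $(W_1\setminus\{u\})\cup\{u^\ast\}$ still meets $\overline u$ in exactly $|\overline u|-1$ vertices and is unchanged on every other twin class, so by the characterization it is a minimal resolving set.

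The principal obstacle, as I see it, is not the exchange step itself but pinning down the converse half of the characterization cleanly---namely that an arbitrary choice of $|\overline u|-1$ representatives per class resolves every non-twin pair. Precisely here the no-singleton-twin hypothesis is indispensable, guaranteeing that each distinguishing witness from $N[x]\setminus N[y]$ has a twin sitting in the candidate set. Once that equivalence is in place, the two-case analysis above closes the proof in a few lines.
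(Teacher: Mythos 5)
Your argument is correct in substance and, at its core, performs exactly the same exchange as the paper: the vertex you swap in for $u$ is the unique member $u^{\ast}$ of $\overline{u}$ omitted by $W_1$, which is precisely the twin $u_2\in W_2\setminus W_1$ that the paper produces via Lemma~\ref{verclass}. The difference is in the packaging. The paper justifies the swap locally --- $u_1$ and $u_2$ are twins, hence resolve exactly the same pairs, so $(W_1\setminus\{u_1\})\cup\{u_2\}$ inherits the resolving property from $W_1$ --- and never needs to know which sets are minimal resolving sets in general. You instead prove the global characterization that the minimal resolving sets are exactly the complements of transversals of the twin classes, after which the exchange is pure counting. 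That characterization is a genuine strengthening of Theorem~\ref{alsinex} (its converse direction is not stated in the paper, though it is implicitly what makes the upper bound $\beta(\Gamma)\le\sum(m_i-1)$ work), and it is the only place where you pay extra: your witness argument asserts $d(z',x)=d(z,x)$ and $d(z',y)=d(z,y)$ for $z'\equiv z$, which can fail when $z$ or $z'$ coincides with $x$ or $y$ (e.g.\ if $z=x$ and $z'$ is an open twin of $x$, then $d(z',x)=2$ may equal $d(z',y)$). This is repairable --- choose $z$ in $N(x)\triangle N(y)$ when $x\not\sim y$ and in $N[x]\triangle N[y]$ when $x\sim y$, so that $z\notin\{x,y\}$, and note that if $z'\in\{x,y\}$ then $z'\in W$ resolves the pair trivially --- and the paper's own proof of Theorem~\ref{alsinex} is no more careful on this point, so I would call it a minor imprecision rather than a gap. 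If you want the shorter route, replace the converse of your characterization by the paper's one-line observation that twins resolve the same pairs.
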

In our next theorem, the metric dimension of the power graph, associated to a dihedral group $D_{2n}$
of order $2n$, is computed.
\begin{Theorem}\label{mdDi}
$\beta(\mathcal{P}_{D_{2n}})=\beta(\mathcal{P}_{Z_{n}})+n-2,$ where $Z_{n}$ is a cyclic group of order $n$.
\end{Theorem}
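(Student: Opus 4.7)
The plan is to exploit the structure of $\mathcal{P}_{D_{2n}}$ and to split any resolving set into its rotation and reflection parts. Writing $D_{2n}=\langle r,s\mid r^n=s^2=e,\, srs=r^{-1}\rangle$, every reflection has order $2$, so in the power graph each reflection is adjacent only to $e$. Thus the $n$ reflections form an open twin class with common neighborhood $\{e\}$, the induced subgraph on the rotations is exactly $\mathcal{P}_{Z_{n}}$, the graph has diameter $2$, and the distance between any two rotations in $\mathcal{P}_{D_{2n}}$ coincides with their distance in $\mathcal{P}_{Z_{n}}$.

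For the lower bound, I would let $W$ be a resolving set and write $W=S\cup R$ with $S\subseteq Z_n$ and $R$ a set of reflections. Distinguishing the $n$ reflection twins forces $|R|\ge n-1$. The next step is to show that $S\cup\{e\}$ resolves $\mathcal{P}_{Z_{n}}$: the vertex $e$ itself distinguishes $e$ from every non-identity rotation, while any pair of non-identity rotations has distance $2$ to every element of $R$ and so must be separated inside $W$ by an element of $S$; since rotation-to-rotation distances agree between the two graphs, $S$ also separates them in $\mathcal{P}_{Z_{n}}$. This yields $|S|\ge\beta(\mathcal{P}_{Z_{n}})-1$ and hence $|W|\ge\beta(\mathcal{P}_{Z_{n}})+n-2$.

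For the upper bound, I first observe that $e$ and the $\phi(n)$ generators of $Z_n$ are precisely the universal vertices of $\mathcal{P}_{Z_{n}}$, and they form a single closed twin class of size $\phi(n)+1\ge 3$ whenever $n\ge 3$. Because swapping two twins inside a basis preserves every vertex representation, I may start from an arbitrary basis of $\mathcal{P}_{Z_{n}}$ and exchange a generator for $e$ to obtain a basis $B$ with $e\in B$. Setting $W=(B\setminus\{e\})\cup R$ with $R$ any $n-1$ reflections gives $|W|=\beta(\mathcal{P}_{Z_{n}})+n-2$. What remains to verify is: the $n-1$ reflections in $R$ separate the reflections from each other and also separate $e$ from every non-identity vertex; two non-identity rotations are separated by $B\setminus\{e\}$, because $e$ plays no role in distinguishing non-identity rotation pairs in $\mathcal{P}_{Z_{n}}$; and a non-identity rotation $r^i\notin B$ is separated from the unique leftover reflection $r^{k_0}s$ by any generator still present in $B\setminus\{e\}$, which has distance $1$ to $r^i$ and distance $2$ to $r^{k_0}s$.

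The last of these verifications is the main obstacle. The naive candidate $W=B\cup R$ has size $\beta(\mathcal{P}_{Z_{n}})+n-1$, one vertex too many; dropping a non-identity element of $B$ may break the separation of some non-identity rotation pair, and dropping $e$ from a badly chosen basis may leave some $r^i\notin B$ whose only neighbor in $B$ is $e$, forcing the representation of $r^i$ to coincide with that of $r^{k_0}s$. Arranging $e\in B$ ensures that $\phi(n)-1\ge 1$ universal generators survive in $B\setminus\{e\}$, and these dominate every non-identity rotation, saving the separation. This is why the twin-swap step is essential, and why the argument requires $n\ge 3$.
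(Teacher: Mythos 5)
Your proof is correct (for $n\ge 3$) but takes a genuinely different route from the paper's. The paper does not argue with resolving sets directly: it invokes the Feng--Ma--Wang formula $\beta(\mathcal{P}_{G})=|V(\mathcal{P}_{G})|-|\mathcal{U}(\mathcal{P}_{G})|+|W(\mathcal{P}_{G})|$ of Theorem~\ref{mdpg}(ii) for both $D_{2n}$ and $\langle a\rangle\cong Z_n$, and reduces the theorem to bookkeeping --- Lemma~\ref{dihinv} shows the $n$ reflections form one twin class and yield no resolving involutions, while $e$ detaches from the generators to become a singleton twin, so $|V|$ grows by $n$, $|\mathcal{U}|$ grows by $2$, and $|W|$ is unchanged. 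You instead prove matching lower and upper bounds from scratch: the lower bound via the split $W=S\cup R$ and the observation that $S\cup\{e\}$ resolves $\mathcal{P}_{Z_n}$, and the upper bound via a twin-swap that forces $e$ into a basis $B$ of $\mathcal{P}_{Z_n}$ so that a universal generator survives in $B\setminus\{e\}$ to separate the leftover reflection from the uncovered rotations --- the one genuinely delicate point, which you handle correctly. The paper's proof is shorter given the cited machinery; yours is self-contained, and it also surfaces the hypothesis $n\ge 3$ that the paper leaves implicit: the statement in fact fails at $n=2$, where $\mathcal{P}_{D_4}\cong K_{1,3}$ has metric dimension $2$ while the formula predicts $1$ (there the rotation $a$ joins the reflections' twin class, so Lemma~\ref{dihinv}(i) breaks down too). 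One cosmetic slip: $e$ and the generators are not always \emph{precisely} the universal vertices of $\mathcal{P}_{Z_n}$ (for $n$ a prime power the graph is complete and every vertex is universal), but all your argument needs is that they lie in a common closed twin class of size $\phi(n)+1\ge 3$, which does hold.
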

In the following theorem, we identified some finite groups whose corresponding power graph
has the exchange property.
\begin{Theorem}\label{Expro}
Let $G$ be a finite group and $\mathcal{P}_{G}$ be the power graph associated to $G$. Then, the exchange
property holds in $\mathcal{P}_{G}$ if one of the following conditions is satisfied:\\
\emph{(i)} $G$ is cyclic and $| G|=2k+1$, for positive integers $k.$\\
\emph{(ii)} $| G|$ is a power of a prime number $p$, and $G$ is
abelian.
\end{Theorem}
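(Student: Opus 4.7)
The strategy is to verify the hypothesis of Theorem~\ref{nosinex} (``no singleton twin implies exchange property'') whenever possible, and to adapt its proof otherwise. The central fact is that in a power graph, $\langle x\rangle=\langle y\rangle$ forces $N[x]=N[y]$, so the $\phi(\mathrm{ord}(x))$ generators of the cyclic subgroup $\langle x\rangle$ form a single twin class, which is non-singleton as soon as $\phi(\mathrm{ord}(x))\ge 2$.

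\emph{Case (i).} Since $|G|=2k+1$, every non-identity element has odd order $d\ge 3$, so $\phi(d)\ge 2$ and $x$ has a twin. The identity is twin with every generator of $G$: in a cyclic group every element is a power of any generator, so $e$ and each generator share the closed neighborhood $V(\mathcal{P}_G)$; since $\phi(|G|)\ge 2$, this gives $e$ a twin too. Thus $\mathcal{P}_G$ has no singleton twin, and Theorem~\ref{nosinex} applies.

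\emph{Case (ii).} If $G\cong Z_{p^a}$ is cyclic, its subgroup lattice is a chain, so $\mathcal{P}_G\cong K_{p^a}$ and every two vertices are mutually twin; Theorem~\ref{nosinex} applies again. If $G$ is non-cyclic, no nonidentity element dominates, so $e$ is the unique vertex with $N[e]=V(\mathcal{P}_G)$, hence a singleton twin, and Theorem~\ref{nosinex} no longer applies verbatim. The plan here is to characterize the minimal resolving sets explicitly: combining the Theorem~\ref{alsinex}-style lower bound on the non-singleton twin classes with the (to be proved) fact that every singleton is redundant, each minimal resolving set consists of $m_i-1$ elements drawn from each non-singleton twin class $T_i$ and contains no singleton. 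The exchange property then follows by the same swap argument used to prove Theorem~\ref{nosinex}, restricted to the non-singleton classes.

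The main obstacle is the non-cyclic half of Case~(ii): showing that singletons (the identity, and when $p=2$ possibly further elements such as $(2,0)\in Z_4\times Z_2$) never appear in a minimal resolving set. Concretely, if $W$ is a resolving set containing such a singleton $s$, one must prove $W\setminus\{s\}$ is still resolving, so that $s$ cannot appear in a minimal $W$. This reduces to a finite distance computation in $\mathcal{P}_G$ that exploits the comparability structure of cyclic subgroups in an abelian $p$-group: the $m_i-1$ co-generators forced into $W$ from each non-singleton twin class should already distinguish every pair that $s$ could separate.
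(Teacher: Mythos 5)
Your case (i) and the cyclic subcase of (ii) are correct and essentially follow the paper's route: show that $\mathcal{P}_{G}$ has no singleton twin and invoke Theorem~\ref{nosinex}. (The paper reaches ``no singleton twin'' in case (i) via Lemma~\ref{sinin} together with Proposition~\ref{samnbhd}, rather than via $\phi(d)\geq 2$, but the content is the same.)

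The genuine gap is the one you flag yourself: for a non-cyclic abelian $p$-group your argument is only a plan. You would still have to prove that no singleton twin (the identity, and for $p=2$ possibly further involutions such as $(2,0)\in Z_{4}\times Z_{2}$) can occur in a minimal resolving set, and that the swap argument of Theorem~\ref{nosinex} then runs on the non-singleton classes alone; neither step is carried out, and the second is not automatic, since the Theorem~\ref{alsinex}-style count of minimal resolving sets is proved in the paper only under the no-singleton-twin hypothesis. That said, you have put your finger on a real defect in the paper: its proof of (ii) asserts that $\mathcal{P}_{G}$ is complete for every abelian group of prime-power order, which is false for non-cyclic ones --- in $Z_{p}\times Z_{p}$ two elements generating distinct subgroups of order $p$ are non-adjacent, and in any non-cyclic group the identity is a singleton twin by Proposition~\ref{samnbhd}, so Theorem~\ref{nosinex} cannot be applied there. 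In short, your proposal is incomplete exactly where the paper's own argument is wrong, and carrying out your outlined reduction (singletons are never needed in a minimal resolving set, after which the exchange swap is confined to the non-singleton classes) is the natural way to actually repair the statement rather than merely reproduce the paper's proof.
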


The rest of the sections are organized as follows:\\
In section 2, the exchange property is discussed; also, proofs of Theorem~\ref{alsinex} and~\ref{nosinex} are
given. In section 3, Theorems ~\ref{mdDi} and~\ref{Expro} are proved.
 \section{The Exchange Property for Resolving Sets}
 Every vector in a finite dimensional vector space is, uniquely, determined (written as a
linear combination) by the elements of a basis of the vector space. A basis of a vector space has the exchange
property. Similarly, each vertex of a finite graph can be,
uniquely, identified by the vertices of a resolving set. Therefore, resolving sets of a finite graph behave
like bases in a finite dimensional vector space. Unlike bases of a vector space, the resolving sets do not always
have the exchange property. The following results about the exchange property for
different graphs can be found in the literature.
\begin{Theorem} [\cite{Boutin}, Theorem 3]
The exchange property holds for resolving sets in trees.
\end{Theorem}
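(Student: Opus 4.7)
The plan is to exploit the classical structural characterization of resolving sets in trees---due to Chartrand, Eroh, Johnson, and Oellermann, and independently Slater and Harary--Melter---to turn the exchange property into a one-leg-at-a-time local check. Call a vertex of $T$ \emph{major} if its degree is at least three, and \emph{exterior major} if some leaf is joined to it by a path whose internal vertices all have degree two. Each such maximal path from an exterior major vertex $v$ is a \emph{leg} of $v$, and let $k_v$ denote the number of legs of $v$. The cited result states that, when $T$ is not a path, a set $W\subseteq V(T)$ is a resolving set if and only if $W$ meets at least $k_v-1$ of the legs of every exterior major vertex $v$; the path case is disposed of separately, since then $\beta(T)=1$ and the two leaf-singletons are the only minimal resolving sets, which exchange into each other at once.

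From this characterization I would first read off a rigid structure for minimal resolving sets. If two vertices of a resolving set $W$ lay on the same leg of some exterior major vertex, deleting either of them would preserve the collection of met legs and contradict minimality. Hence a minimal resolving set contains exactly one vertex on each of exactly $k_v-1$ legs of every exterior major vertex $v$, and no other vertices; in particular every minimal resolving set has the common size $\sum_v(k_v-1)=\beta(T)$.

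With this structure, the exchange becomes a short case analysis. Let $W_1,W_2$ be minimal resolving sets and let $u\in W_1$ lie on leg $P$ of the exterior major vertex $v$; let $P^\ast$ be the unique leg of $v$ not met by $W_1$. If $W_2\cap P\neq\emptyset$, take the unique $w\in W_2\cap P$: then $(W_1\setminus\{u\})\cup\{w\}$ meets exactly the same legs as $W_1$ at every exterior major vertex, with a single representative per met leg, so by the characterization it is a minimal resolving set. Otherwise $P$ is the unique leg of $v$ missed by $W_2$, so $W_2$ meets $P^\ast$ at a unique vertex $w$; then $(W_1\setminus\{u\})\cup\{w\}$ meets all legs of $v$ except $P$, is unchanged at every other exterior major vertex, and is again a minimal resolving set.

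The main obstacle in executing this plan is careful bookkeeping to guarantee that inserting $w$ neither doubles a representative on some leg nor silently leaves a leg uncovered. The one-representative-per-leg rigidity of both $W_1$ and $W_2$ closes both gaps: in the first case $u$ is removed from $P$ exactly when $w$ is added there, and in the second case $w$ cannot lie in $W_1$ (otherwise $W_1$ would meet the supposedly unmet leg $P^\ast$), so the exchange simply moves the unmet leg at $v$ from $P^\ast$ to $P$ and preserves the structural count leg by leg.
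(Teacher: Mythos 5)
The paper does not actually prove this statement --- it is quoted verbatim from [Boutin] as background --- so the only meaningful comparison is with the source's argument, and for trees that are not paths your leg-based proof is essentially that argument: the rigidity of minimal resolving sets (exactly one vertex on exactly $k_v-1$ legs at each exterior major vertex, and nothing else) plus the two-case swap is exactly how the exchange is carried out there. Two caveats on that main part: the ``if'' direction of the characterization (that meeting at least $k_v-1$ legs at every exterior major vertex already forces a set to be resolving, for an arbitrary set, not just the leaf-based basis of Chartrand--Eroh--Johnson--Oellermann) is the load-bearing step and is stated, not proved; and you should fix the convention that a leg is counted without its exterior major vertex, since otherwise a single major vertex would ``meet'' all of its legs at once and the rigidity claim would be ambiguous.

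The genuine gap is the path case. Your claim that for a path ``the two leaf-singletons are the only minimal resolving sets'' is false once the path has at least four vertices: in $P_4=v_1v_2v_3v_4$ the set $\{v_2,v_3\}$ is resolving (representations $(1,2),(0,1),(1,0),(2,1)$ are distinct) while neither $\{v_2\}$ nor $\{v_3\}$ is, so $\{v_2,v_3\}$ is an inclusion-minimal resolving set of size $2$ coexisting with the minimal resolving set $\{v_1\}$ of size $1$; more generally any pair of internal vertices of $P_n$ is such a set. This is not a repairable bookkeeping slip: taking $W_1=\{v_2,v_3\}$, $W_2=\{v_1\}$ and $u=v_2$, the only candidate replacement yields $\{v_1,v_3\}$, which is resolving but not minimal, so under the definition of the exchange property used in this paper (minimal meaning inclusion-minimal) paths on at least four vertices actually violate the statement. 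To make your proof (and the quoted theorem) correct you must either exclude paths or read ``minimal resolving set'' as ``resolving set of minimum cardinality,'' in which case the path case really is trivial and the rest of your argument goes through.
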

\begin{Theorem} [\cite{Boutin}, Theorem 7]
For $n\geq 8$, the resolving sets do not have the exchange property in a
wheel graph $W_{n}$.
\end{Theorem}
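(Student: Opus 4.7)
The plan is to refute the exchange property for $W_n$, $n\ge 8$, by exhibiting two minimal resolving sets $W_1,W_2$ together with a vertex $u\in W_1$ such that $(W_1\setminus\{u\})\cup\{v\}$ fails to be a minimal resolving set for every $v\in W_2$. I begin with the distance geometry of $W_n$: the hub $c$ is at distance $1$ from every rim vertex, and two distinct rim vertices $v_i,v_j$ are at distance $1$ or $2$ according as they are consecutive on the cycle $C_n$ or not. Consequently $c$ contributes a constant coordinate to the representation of every rim vertex, so minimal resolving sets may be assumed to lie in the rim, and the hub is automatically distinguished whenever the chosen set has at least three rim members.

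Next I would encode a rim-only candidate $W$ by the cyclic sequence of \emph{gaps} $(g_1,\ldots,g_k)$, where $g_i$ is the number of non-$W$ rim vertices strictly between the $i$-th and $(i+1)$-st members of $W$. Since a non-$W$ vertex $v_i$'s representation is determined by $W\cap\{v_{i-1},v_{i+1}\}$, a direct case analysis yields the resolving criterion: $W$ is resolving iff (R1) every $g_i\le 3$, (R2) at most one $g_i=3$, and (R3) no $w\in W$ has both flanking gaps of length $\ge 2$. Removing $w\in W$ merges its two flanking gaps into a new gap of length $g_L+g_R+1$, so $W$ is minimal iff each such merge violates one of (R1)--(R3).

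To produce the witnesses, take for $n=8$ the sets $W_1=\{v_0,v_4,v_6\}$ with gap sequence $(3,1,1)$ and $W_2=\{v_0,v_1,v_4,v_5\}$ with gaps $(0,2,0,2)$; both are minimal resolving by the criteria above. With $u=v_6\in W_1\setminus W_2$, inspection of each candidate $v\in W_2$ is immediate: $v\in\{v_0,v_4\}$ shrinks the set to $\{v_0,v_4\}$ with gaps $(3,3)$, violating (R2); $v=v_1$ yields $\{v_0,v_1,v_4\}$ with gaps $(0,2,3)$, violating (R3) at $v_4$; $v=v_5$ yields $\{v_0,v_4,v_5\}$ with gaps $(3,0,3)$, again violating (R2). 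For $n\ge 9$ an analogous pattern works: choose $W_1$ so that some $u\in W_1$ has flanking gaps summing to at least $3$ (so removing $u$ creates a gap of length $\ge 4$) and choose $W_2$ whose vertices all lie outside that resulting long stretch, forcing every exchange to leave the length-$4$ gap essentially intact; for instance, when $n=9$ one can take $W_1=\{v_0,v_4,v_5,v_8\}$, $W_2=\{v_0,v_1,v_5,v_6\}$, $u=v_4$. The principal obstacle is supplying such a construction uniformly in $n$, which requires a modest case analysis on $n\bmod 4$ so that both $W_1$ and $W_2$ remain minimal resolving sets in each residue class while the merged gap produced at $u$ is never bridged by an element of $W_2$.
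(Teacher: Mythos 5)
First, a point of reference: the paper does not prove this statement at all --- it is imported verbatim from Boutin [\cite{Boutin}, Theorem 7] as background --- so your attempt can only be judged on its own terms, not against an in-paper argument. Your framework is sound: the reduction to rim-only resolving sets, the gap encoding, and the criterion (R1)--(R3) are the correct characterization of resolving subsets of the rim for $n\ge 8$. Your $n=8$ witnesses check out: $W_1=\{v_0,v_4,v_6\}$ (gaps $(3,1,1)$) and $W_2=\{v_0,v_1,v_4,v_5\}$ (gaps $(0,2,0,2)$) are both minimal resolving sets, and each of the four exchanges at $u=v_6$ produces a non-resolving set. The only slip is in the last case: $\{v_0,v_4,v_5\}$ has gap sequence $(3,0,2)$, not $(3,0,3)$, so it fails (R3) at $v_0$ rather than (R2); the conclusion is unaffected. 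The $n=9$ witnesses also verify.

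The genuine gap is that the theorem quantifies over every $n\ge 8$ and you prove exactly two instances. The sentence promising that ``an analogous pattern works'' for general $n$, up to ``a modest case analysis on $n\bmod 4$,'' is precisely the content that is missing, and it is not a routine afterthought: for each residue class one must produce a pair $W_1,W_2$ that are genuinely \emph{minimal} (which itself requires that every single-vertex deletion violate one of (R1)--(R3), a nontrivial constraint on the gap pattern) together with a $u\in W_1$ whose merged gap no single vertex of $W_2$ can repair, bearing in mind that an inserted vertex can also land inside the merged gap and split it into two admissible pieces. Note also that the tempting shortcut --- invoke the standard fact that the exchange property forces all minimal resolving sets to have equal cardinality, and observe that your $n=8$ pair already has $|W_1|=3\neq 4=|W_2|$ --- does not obviously dispatch all $n$: a short enumeration of admissible gap sequences for $n=9$ indicates that every minimal resolving set of $W_9$ is a basis of size $4$, so there the direct exchange-failure verification you chose appears unavoidable. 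Until the uniform construction is written down and checked in each residue class, what you have is strong evidence plus a correct method, not a proof.
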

\begin{Theorem}  [\cite{tomescu1}, Theorem 5]
For $n\geq 4$ and $n$ is even, the nacklace graph
$Ne_{n}$ does not have the exchange property.
\end{Theorem}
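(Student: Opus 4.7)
The plan is to exhibit a failure of the exchange property by constructing two minimal resolving sets $W_1$ and $W_2$ of $Ne_n$ and identifying a vertex $u\in W_1$ whose removal cannot be compensated by adjoining any single vertex of $W_2$. The parity assumption (even $n$, $n\ge 4$) will be used essentially: it endows $Ne_n$ with an antipodal symmetry pairing up vertices at maximum distance, and those symmetric pairs can be separated only by vertices lying in one very restricted local region of the necklace.

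The first step is to set up coordinates and a distance table for $Ne_n$ using the natural labeling inherited from its two concentric cycles. For each vertex $v$ I would list the pairs that $v$ fails to resolve. Because of the even-$n$ symmetry, certain antipodal pairs $\{x,y\}$ will turn out to be resolved only by vertices in a specific ``critical bead'' of the necklace; recording this is essentially bookkeeping but is the combinatorial engine of the whole proof.

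Next, I would construct two explicit minimal resolving sets. The first, $W_1$, is designed so that exactly one of its members, $u$, lies in the critical bead for a distinguished antipodal pair $\{x,y\}$, while the remaining vertices of $W_1$ are forced to be present by the usual twin-like constraints. The second, $W_2$, is a genuinely different minimal resolving set that resolves $\{x,y\}$ by a completely different mechanism (e.g., relying on a different bead of the necklace). Minimality of both sets is then checked by verifying that deleting any one member reintroduces at least one unresolved pair, which reads off directly from the distance table.

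The heart of the argument, and the main obstacle, is the exhaustive verification that for every $v\in W_2$ the set $(W_1\setminus\{u\})\cup\{v\}$ fails to be a minimal resolving set: either $\{x,y\}$ is still unresolved (because $v$ lies outside the critical bead), or else some previously indispensable member of $W_1$ becomes redundant in the new configuration, contradicting minimality. To keep this tractable, I would invoke the dihedral automorphism group of $Ne_n$ to cut the candidate vertices $v$ down to a handful of orbit representatives, and then dispatch those cases by direct distance comparisons. Evenness of $n$ is indispensable throughout: for odd $n$ the antipodal pair $\{x,y\}$ does not arise and the whole obstruction collapses, which is precisely why the statement is restricted to even $n\ge 4$.
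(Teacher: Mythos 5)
This theorem is not proved in the paper at all: it is quoted verbatim from \cite{tomescu1} (Theorem 5 there), so there is no in-paper argument to compare against. Judged on its own terms, your proposal is not yet a proof but a plan in which every load-bearing object is left unconstructed. You never define the necklace graph $Ne_n$ or fix its vertex labeling; the ``critical bead,'' the distinguished antipodal pair $\{x,y\}$, the sets $W_1$ and $W_2$, and the vertex $u$ are all asserted to exist but never exhibited; the distance table, the minimality checks, and the exhaustive verification that no $v\in W_2$ can replace $u$ are all deferred with ``I would.'' Since the entire content of a counterexample-type theorem is the explicit counterexample, nothing here can be checked or salvaged without doing essentially all of the work from scratch. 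The claim that only vertices in one ``bead'' resolve a given antipodal pair is plausible but is exactly the point that needs a computation, and it is the one thing you do not supply.

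There is also a strategic issue worth flagging. By \cite{Boutin}, if a graph has the exchange property then all minimal resolving sets have the same cardinality; consequently the standard, and much lighter, way to refute the exchange property (and the route one would expect \cite{tomescu1} to take) is to exhibit a single minimal resolving set whose cardinality exceeds $\beta(Ne_n)$. That requires producing one basis and one larger minimal resolving set, with no case analysis over all $v\in W_2$ and no appeal to the automorphism group. Your direct attack on the exchange condition is logically valid but strictly harder, and in its present form it proves nothing; I would recommend either carrying out the full construction you outline (with explicit coordinates and distances) or switching to the unequal-cardinality argument.
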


\begin{Lemma}\label{verclass}
Let $W$ be a resolving set for a graph $\Gamma$, and $v_1,v_2\in V(\Gamma)$. Then, either $v_1\in W$ or $v_2\in W$ for  $v_1\equiv v_2$ .
\end{Lemma}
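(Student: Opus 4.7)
The plan is a short proof by contradiction. Suppose, for contradiction, that $v_1, v_2 \in V(\Gamma)$ are twins with $v_1 \equiv v_2$, yet neither $v_1$ nor $v_2$ belongs to the resolving set $W$. I would then exhibit that, under this assumption, $v_1$ and $v_2$ receive the same representation with respect to $W$, which directly contradicts the hypothesis that $W$ resolves $\Gamma$.

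The key ingredient is the distance equality for twins already recorded in the introduction: whenever $u_1 \equiv u_2$, one has $d_\Gamma(u_1,w)=d_\Gamma(u_2,w)$ for every $w \in V(\Gamma)$ different from $u_1$ and $u_2$ (the cases $w=u_1$ or $w=u_2$ being the only potential exceptions, since then one of the two distances is $0$). Because we are assuming $v_1, v_2 \notin W$, every vertex $w \in W$ lies outside $\{v_1,v_2\}$, so the twin-equality applies uniformly across $W$. Consequently, the coordinate-wise equality
\[
\bigl(d_\Gamma(v_1,w)\bigr)_{w \in W} \;=\; \bigl(d_\Gamma(v_2,w)\bigr)_{w \in W}
\]
holds, i.e.\ $v_1$ and $v_2$ have identical representations with respect to $W$. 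This contradicts the assumption that $W$ is a resolving set, completing the proof.

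I do not anticipate any real obstacle; the only point requiring care is that the distance equality defining twins is only automatic for test vertices distinct from $v_1$ and $v_2$, and the contradiction hypothesis $v_1,v_2 \notin W$ is precisely what guarantees this caveat is vacuous here. Thus the argument is a direct one-step contrapositive of the twin property, and no case analysis or auxiliary construction is needed.
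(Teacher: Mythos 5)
Your proposal is correct and follows essentially the same argument as the paper: both observe that twins agree in distance to every vertex outside $\{v_1,v_2\}$, so if neither lay in $W$ they would have identical representations, contradicting that $W$ resolves $\Gamma$. Your write-up is somewhat more careful in noting explicitly why the exceptional cases $w=v_1$ or $w=v_2$ cannot arise, but the substance is identical.
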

\begin{proof}
For $v_1\equiv v_2$, we have $d_{\Gamma}(v_{1},u)$=$d_{\Gamma}(v_{2},u)$ for all $u \in
V({\Gamma})\setminus \{v_1,v_2\}$. Therefore, $v_{1}$ and $v_{2}$ can not be part of  $V({\Gamma})\setminus W$. Otherwise, $v_1$ and $v_2$ remain unresolved.
\end{proof}

\emph{\textbf{Proof of the Theorem\ref{alsinex}}:}
Let $W$ be a basis of $\Gamma$. By Lemma~\ref{verclass}, $W$ contains $m_i-1$ vertices of each twin class of size $m_i$.  Now, let $u$ and $v$ be two vertices which are not twins. Then, there must be some $w\in W$ such that $d_{\Gamma}(u,w)\neq d_{\Gamma}(v,w)$; otherwise $d_{\Gamma}(u,x)=d_{\Gamma}(v,x)$ for all $x\in V(\Gamma)$, which means that $u$ and $v$ twins, a contradiction. Consequently, exactly one representative, from each twin class, stay outside $W$. Therefore, $\beta(\Gamma)=\sum_{i=1}^n m_i-n.$\\
 The cardinality of a minimal resolving set $W_1$ is $\geq \beta(\Gamma).$ Now, $W_1$ must have exactly $\beta(\Gamma)=\sum_{i=1}^n m_i-n$ vertices. Otherwise, $W_1$ contains an entire twin class $\overline{u}$ of a vertex $u$, and $W_1\setminus \{u\}$ is again resolving set, a contradiction. Therefore, every minimal resolving set is a basis.

\emph{\textbf{Proof of the Theorem\ref{nosinex}}:}
   Let $W_{1}$ and $W_{2}$ be two different minimal resolving sets in the graph $\Gamma$, and  let $u_{1}\in W_1$. If $u_1\in W_2$, then obviously $(W_1\setminus \{u_{1}\})\cup \{u_{1}\}$ is a minimal resolving set. Let $u_1\notin W_2$. There exists a vertex $u_2\notin W_1$ such that $ u_{1}\equiv u_2$. Otherwise, $W_1$ contains an entire twin class, and $W_1$ is not minimal by Theorem~\ref{alsinex}, a contradiction. Now, by Lemma~\ref{verclass}, $ u_{2}\in W_2$, and every vertex in $V(\Gamma)\setminus \{u_1,u_2\}$ is at the same distance from the vertices $u_1$ and $u_2$. Therefore, the vertices which are resolved by $u_1$ are also resolved by $u_2$ and vice versa. Consequently, $(W_1\setminus \{u_{1}\})\cup \{u_{2}\}$ is again a minimal resolving set. \hfill $\Box$
\section{Power Graph of Finite groups}
\begin{Proposition} [\cite{Cameron1}, Proposition 2]\label{samnbhd} Suppose $x$ and
$y$ are elements of an abelian group $G$, then $x$ and $y$ have the
same closed neighborhoods, in the power graph $\mathcal{P}_{G}$, if
and only if one of the following holds:\\
\emph{(i)} $\langle x \rangle=\langle y \rangle;$\\
\emph{(ii)} $G$ is cyclic, and one of $x$ and $y$ is a generator of
$G$ and the other is the\\ \indent identity $e$; and\\
\emph{(iii)} $G$ is cyclic of prime order ($x$ and $y$ are
arbitrary).
\end{Proposition}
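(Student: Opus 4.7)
The plan is to handle the two directions of the equivalence separately. For the ($\Leftarrow$) direction, I would verify each of (i), (ii), (iii) directly: in (i), $\langle x\rangle=\langle y\rangle$ makes the two closed neighborhoods coincide, because $w$ is adjacent to $x$ iff $w\in\langle x\rangle$ or $\langle x\rangle\subseteq\langle w\rangle$, a condition invariant under replacing $\langle x\rangle$ by $\langle y\rangle$; in (ii), taking $y$ a generator of $G$ and $x=e$, every vertex lies in $\langle y\rangle$ and $e$ lies in every cyclic subgroup, so $N[y]=G=N[e]$; in (iii), the power graph of $\mathbb{Z}/p$ is the complete graph $K_p$ (every non-identity element is a generator), so trivially $N[v]=G$ for every $v$.

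For the ($\Rightarrow$) direction, I would start from the observation $x\in N[x]=N[y]$, which forces $x\sim y$ and hence $x\in\langle y\rangle$ or $y\in\langle x\rangle$; after a possible swap, assume $\langle x\rangle\subseteq\langle y\rangle$. If equality holds we land in (i), so assume $\langle x\rangle\subsetneq\langle y\rangle$; the task is then to derive either (ii) or (iii). The strategy is to translate $N[x]=N[y]$ separately on elements inside and outside $\langle y\rangle$. Inside $\langle y\rangle$: any $w\in\langle y\rangle$ belongs to $N[y]$, hence to $N[x]$, so either $w\in\langle x\rangle$ or $\langle x\rangle\subseteq\langle w\rangle$; translating to the divisor lattice of $n=|y|$, this yields the arithmetic condition that every divisor of $n$ be comparable (under divisibility) to $d'=n/|\langle x\rangle|$, which pins $n$ down to a very restricted shape (forcing $d'\in\{1,n\}$ or a prime-power structure). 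Outside $\langle y\rangle$: for every $z\in G\setminus\langle y\rangle$, the equality $N[x]=N[y]$ demands $\langle x\rangle\subseteq\langle z\rangle$ precisely when $\langle y\rangle\subseteq\langle z\rangle$.

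The main obstacle I anticipate is ruling out non-cyclic $G$ and pinning down the remaining structure. If $G$ were non-cyclic, the structure theorem for finite abelian groups furnishes a subgroup isomorphic to $\mathbb{Z}/p\times\mathbb{Z}/p$ for some prime $p$; combining a generator of one such $\mathbb{Z}/p$-factor lying outside $\langle y\rangle$ with a suitable element of $\langle y\rangle$, I would construct a witness $z\in G\setminus\langle y\rangle$ satisfying $\langle x\rangle\subseteq\langle z\rangle$ but $\langle y\rangle\not\subseteq\langle z\rangle$, contradicting the outside constraint unless $x=e$. This forces $G$ to be cyclic with $y$ a generator and $x=e$, placing us in case (ii); the degenerate subcase $|G|$ prime is then case (iii). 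Assembling the inside and outside constraints with the structural input completes the equivalence.
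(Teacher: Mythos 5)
The paper offers no proof of this proposition; it is imported verbatim from Cameron and Ghosh \cite{Cameron1}, so there is no in-paper argument to compare against, and your proposal must stand on its own. The backward direction is fine. The genuine gap is in the forward direction, exactly where the sketch turns vague. Your ``inside $\langle y\rangle$'' analysis correctly reduces $N[x]=N[y]$ (with $\langle x\rangle\subsetneq\langle y\rangle$) to the condition that every divisor of $n=o(y)$ be comparable under divisibility to $o(x)$, and you correctly record the resulting alternatives: either $x=e$ or $n$ has prime-power structure. But your closing paragraph then asserts that the argument ``forces $G$ to be cyclic with $y$ a generator and $x=e$,'' silently discarding the prime-power branch. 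That branch is not vacuous and cannot be pushed into (ii) or (iii) as stated: in $Z_4=\langle g\rangle$ the power graph is complete (as it is for every cyclic group of prime power order, since the subgroup lattice is a chain), so $N[g]=N[g^2]=G$; yet $\langle g\rangle\neq\langle g^2\rangle$, neither element is the identity, and $4$ is not prime. The pair $x=e$, $y=g^2$ gives the same failure.

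The real culprit is that the statement you were handed is false as written, so no proof can close the gap: case (iii) of the original Cameron--Ghosh proposition reads ``$G$ is cyclic of prime \emph{power} order.'' With that correction your plan does go through: the divisor-comparability condition yields either $x=e$, which via your ``outside'' constraint forces $G$ cyclic with $y$ a generator (case (ii)), or $o(y)$ a prime power, where one must further argue, using the outside constraint together with your $Z_p\times Z_p$ witness idea, that $G$ itself is cyclic of prime power order (the corrected case (iii)). A further symptom of the misquotation: with ``prime order,'' case (iii) is entirely redundant, since in $Z_p$ any two non-identity elements satisfy $\langle x\rangle=\langle y\rangle=G$ (case (i)) and an identity--generator pair is case (ii). You should flag the statement as defective rather than claim the forward direction closes.
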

\begin{Definition}\label{nnn} \emph{(\cite{MF})}
 For elements $x$ and $y$ in a group $G$, write
 $R\{x,y\}=\{z:z\in V(\mathcal{P}_{G}), d_{\mathcal{P}_{G}}(x,z)\neq d_{\mathcal{P}_{G}}(y,z)\}$.
  \end{Definition}
An involution is a non-identity element of order 2 of a group $G$. A resolving involution, in the power graph $\mathcal{P}_{G}$ of a group $G$, is an involution $w$ satisfies that, there exist two vertices $ x,y\in V(\mathcal{P}_{G})\setminus\overline{w}$ with
$R\{x,y\}$=$\{x,y,w\}$. let $W(\mathcal{P}_{G})$ denotes the set of all resolving involutions of $\mathcal{P}_{G}$.
\begin{Example}
Let $G=\{e,x,x^{2},x^{3},x^{4},x^{5}\}$ be the cyclic group of order 6. Note that
$R\{x,y\}=\{u,v,x^{3}\}$, for $u\in \{x,x^{5}\}$, and
$v\in\{x^{2},x^{4}\}$. Therefore, $x^{3}$ is a resolving involution of
of $\mathcal{P}_{G}$.
\end{Example}
Let $\Psi$ denote the set of noncyclic groups $G$ such that there
exists an odd prime $p$ such that the following conditions hold (see~\cite{MF}):\\
(C1) the prime divisors of $| G |$ are $2$ and $p$;\\
(C2) the subgroup of order $p$ is unique;\\
(C3) there is no element of order $4$ in $G$; and\\
(C4) each involution of $G$ is contained in a cyclic subgroup of
order $2p$.

In the original paper~\cite{MF}, for a finite group $G$, the notations $| G
|$; $| \mathcal{U}(G) |$; and $ | W(G) |$ were used for $| V(\mathcal{P}_{G})
|$; $| \mathcal{U}(\mathcal{P}_{G}) |$; and $ | W(\mathcal{P}_{G}) |$ respectively. We give the following results in our notations.
\begin{Theorem} \label{mdpg}\emph{(\cite{MF}, Theorem 3.23)}\\\\
\emph{(i)} If $G\in \Psi,$ then $$\beta(\mathcal{P}_{G})=| V(\mathcal{P}_{G})
|-|\mathcal{U}(\mathcal{P}_{G}) | +1.$$\\
\emph{(ii)} If $G\notin \Psi,$ then $$\beta(\mathcal{P}_{G})=| V(\mathcal{P}_{G})
|-| \mathcal{U}(\mathcal{P}_{G}) | + | W(\mathcal{P}_{G}) |.$$
\end{Theorem}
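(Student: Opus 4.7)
The plan is to bootstrap from Theorem~\ref{alsinex} and then carefully account for the singleton twins that force us past the twin-class bound. First, Lemma~\ref{verclass} forces any resolving set $W$ to contain at least $m_i-1$ vertices from every twin class of $\mathcal{P}_G$ of size $m_i$, contributing a baseline of $|V(\mathcal{P}_G)|-|\mathcal{U}(\mathcal{P}_G)|$ vertices. Let $W_0$ denote such a minimal selection, which leaves exactly one representative per class outside $W$. The only vertices in $V(\mathcal{P}_G)\setminus W_0$ that can still be confused by $W_0$ are the class representatives themselves, and the question reduces to: which pairs of representatives are \emph{not} resolved by $W_0$, and how cheaply can we separate them?

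The next step is to characterise unresolved pairs via the set $R\{x,y\}$ of Definition~\ref{nnn}. A pair of representatives $x,y$ is unresolved by $W_0$ iff $R\{x,y\}\cap W_0=\emptyset$, i.e.\ every vertex outside $\{x,y\}$ that distinguishes them lies in a singleton twin class. Using Proposition~\ref{samnbhd} (and its non-abelian analogue) plus the structure of the power graph, I would prove that whenever such a pair $x,y$ exists, the distinguishing singletons are in fact all involutions; indeed, if $z\in R\{x,y\}\setminus\{x,y\}$, then $z$ must be joined to exactly one of $x,y$ via a power relation whose order-two feature forces $z^2=e$. This identifies the ``critical'' triples with precisely the resolving involutions $w\in W(\mathcal{P}_G)$, each of which witnesses some $\{x,y\}\subseteq V(\mathcal{P}_G)\setminus\overline w$ with $R\{x,y\}=\{x,y,w\}$. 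To separate such a pair, one must adjoin to $W_0$ either $w$ itself or one of its witnesses; since the witnesses are themselves singleton representatives outside $W_0$, this does cost one extra vertex per critical triple unless the triples coincide or interlock.

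For part~(ii), i.e.\ $G\notin\Psi$, the plan is to show that distinct resolving involutions produce essentially disjoint resolution constraints, so the extra cost is exactly $|W(\mathcal{P}_G)|$; the upper bound comes from adjoining all resolving involutions to $W_0$, and the lower bound from exhibiting, for each $w\in W(\mathcal{P}_G)$, a pair $\{x,y\}$ whose only separators are $w$ and the pair itself, arguing that including a witness $x$ in place of $w$ still costs one vertex and cannot simultaneously handle another resolving involution's triple. For part~(i), i.e.\ $G\in\Psi$, conditions (C1)--(C4) force a very rigid structure: the unique order-$p$ subgroup together with the absence of elements of order $4$ makes every involution lie in a common cyclic subgroup of order $2p$ with every order-$p$ element, and I would use this to show that all the critical triples share a single involution (or can be simultaneously resolved by a single well-chosen vertex), yielding the ``$+1$'' instead of ``$+|W(\mathcal{P}_G)|$''.

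The main obstacle, and where most of the work lives, is the combinatorial bookkeeping in the last paragraph: proving that in the generic case distinct resolving involutions truly demand independent extra vertices (so that no clever reuse beats the $|W(\mathcal{P}_G)|$ bound), and conversely that the $\Psi$-conditions force maximal overlap so a single additional vertex suffices. Both directions require a detailed case analysis of how cyclic subgroups of $G$ intersect and how the power-graph distances behave between their generators, involutions, and the identity; the $\Psi$-case in particular hinges on showing that the absence of order-$4$ elements, together with (C2) and (C4), prevents the emergence of a second, independent critical triple.
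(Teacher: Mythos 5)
The paper does not prove this statement at all: Theorem~\ref{mdpg} is quoted verbatim from Feng, Ma and Wang \cite{MF} (their Theorem~3.23) and is used here as a black box, so there is no internal proof to compare yours against. Judged on its own terms, your proposal is a plan rather than a proof, and the plan defers precisely the steps that constitute the theorem. The reduction of the problem to ``critical triples'' is the one part that is essentially sound: by Lemma~\ref{verclass} a resolving set contains all but one vertex of each twin class, and a pair of class representatives left unresolved must be separated only by singleton twins, which by Lemma~\ref{sinin} are involutions or $e$; since $e$ is adjacent to every vertex of a power graph it separates nothing, so the separators are involutions. But your stated reason for this (``a power relation whose order-two feature forces $z^2=e$'') is not an argument, and you also need the converse: that each $w\in W(\mathcal{P}_G)$ really does witness a pair that the baseline set fails to resolve, which requires checking that the $x,y$ with $R\{x,y\}=\{x,y,w\}$ are themselves singletons (otherwise a twin of $x$ would lie in $R\{x,y\}$ and contradict $R\{x,y\}=\{x,y,w\}$).

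The genuine gaps are the two bounds. For the lower bound in (ii) you must show that distinct resolving involutions impose \emph{independent} constraints, so that no single added vertex (whether an involution or one of the witnesses $x,y$) can repair two critical triples at once; you explicitly label this ``the main obstacle, and where most of the work lives'' and give no argument for it. For the upper bound you must verify that the baseline set together with all of $W(\mathcal{P}_G)$ is actually resolving, i.e.\ that no other unresolved pair of representatives survives; this is asserted, not shown. Part (i) is entirely speculative: you do not extract from (C1)--(C4) any concrete structural fact about $\mathcal{P}_G$ that would force all critical triples to be handled by one extra vertex, nor do you explain why the answer is $+1$ rather than $+0$ when $G\in\Psi$. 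In \cite{MF} these points occupy a long sequence of structural lemmas about cyclic subgroups and the sets $R\{x,y\}$; none of that analysis is present or replaced by an alternative argument here, so the proposal cannot be accepted as a proof.
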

\begin{Corollary} [\cite{MF}, Corollary 3.24] \label{mdcyc}  Suppose that $n= p^{r_{1}}_{1}\cdots p^{r_{t}}_{t}$, where $p_{1},\ldots,p_{t}$ are primes with
 $p_{1}<\cdots <p_{t}$,
and $r_{1},\ldots,r_{t}$ are positive integers. Let $Z_{n}$ denote
the cyclic group of order $n$. Then
\begin{center}
$\beta(\mathcal{P}_{Z_{n}})=\left\{\begin{array}{ll}
                             n-1, & \mbox{if } t=1 ; \\
                             n-2r_{2}, & \mbox{if }(t,p_{1},t_{1})=(2,2,1) ;\\
                             n-2r_{1}, & \mbox{if }(t,p_{1},t_{2})=(2,2,1) ;\\
                             n+1-\prod\limits^t_{i=1}(r_{i}+1), & \mbox{otherwise.}
                           \end{array}
                         \right.$
\end{center}
\end{Corollary}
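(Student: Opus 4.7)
The plan is to reduce the computation to Theorem~\ref{mdpg}. Because $Z_n$ is cyclic it lies outside $\Psi$, so part (ii) of that theorem applies and yields
\[
\beta(\mathcal{P}_{Z_n}) \;=\; n \;-\; |\mathcal{U}(\mathcal{P}_{Z_n})| \;+\; |W(\mathcal{P}_{Z_n})|,
\]
so the task reduces to counting twin classes and resolving involutions in $\mathcal{P}_{Z_n}$.

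For the twin classes, I would first treat $t=1$ separately: the subgroup lattice of $Z_{p^r}$ is a chain, so for any two elements one is a power of the other. Hence $\mathcal{P}_{Z_{p^r}}=K_n$, giving $|\mathcal{U}|=1$ and $|W|=0$, and the first line $\beta=n-1$ follows immediately. For $t\ge2$, Proposition~\ref{samnbhd} shows that two vertices are twins iff they generate the same cyclic subgroup, with the identity additionally fused into the class of generators. Thus the twin classes biject with the divisors of $n$ after collapsing the divisors $1$ and $n$, giving $|\mathcal{U}|=\tau(n)-1=\prod_{i=1}^t(r_i+1)-1$.

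For $|W|$: if $n$ is odd, $Z_n$ has no involution, so $|W|=0$ and substituting produces $\beta=n+1-\prod(r_i+1)$, which is the ``otherwise'' formula for odd $n$. If $n$ is even the unique involution is $w=n/2$ with $\overline{w}=\{w\}$, so $|W|\in\{0,1\}$. In the two exceptional cases I would exhibit an explicit resolving pair. For $n=2p_2^{r_2}$, take $x=e$ and $y$ of order $p_2$: two vertices of $\mathcal{P}_{Z_n}$ are adjacent iff the order of one divides the order of the other, and every order appearing in $Z_n$ other than $2$ is either a divisor or a multiple of $p_2$; hence every $z\neq w$ is adjacent to $y$, while $d(y,w)=2\neq1=d(x,w)$. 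This gives $R\{x,y\}=\{x,y,w\}$ and $|W|=1$, producing $\beta=n-2r_2$. A parallel choice ($x$ of order $p_2$, $y$ of order $2p_2$) handles $n=2^{r_1}p_2$ and yields $\beta=n-2r_1$.

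The hardest step is the remaining ``otherwise'' case with $n$ even---either $t\ge3$ with $p_1=2$, or $n=2^{r_1}p_2^{r_2}$ with $r_1,r_2\ge2$---where one must prove $|W|=0$. My plan is, given any candidate pair $(x,y)$ separated by $w$ (so without loss of generality $d(x,w)=1$ and $d(y,w)\ge2$), to produce some $z\neq w$ with $d(x,z)\neq d(y,z)$. The richer divisor lattice always supplies such a $z$: a third prime factor, or a second power of $2$ together with a second power of $p_2$, forces the existence of an order $d\neq2$ whose elements witness a distance discrepancy between $x$ and $y$. Completing this case analysis over all possible order types of $x$ and $y$ is the main technical obstacle; once done, substituting $|\mathcal{U}|$ and $|W|$ into the displayed identity gives the four-line formula of the corollary.
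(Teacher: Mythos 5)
This corollary is not proved in the paper at all: it is imported verbatim from \cite{MF} (Corollary 3.24 there) and used as a black box, so there is no internal proof to compare your attempt against. Your reduction is nonetheless the route the source itself takes: $\Psi$ contains only noncyclic groups, so Theorem~\ref{mdpg}(ii) gives $\beta(\mathcal{P}_{Z_n})=n-|\mathcal{U}(\mathcal{P}_{Z_n})|+|W(\mathcal{P}_{Z_n})|$, and everything rests on the two counts. Your computation of $|\mathcal{U}(\mathcal{P}_{Z_n})|$ as one less than the number of divisors of $n$ (i.e.\ $\prod_{i=1}^{t}(r_i+1)-1$) for $t\ge 2$, the completeness of $\mathcal{P}_{Z_{p^r}}$, and your two explicit resolving pairs for $n=2p_2^{r_2}$ and $n=2^{r_1}p_2$ all check out, using that in a cyclic group $u\sim v$ exactly when one of $o(u),o(v)$ divides the other.

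There are two gaps, one of which is substantial. The substantial one is the claim $|W(\mathcal{P}_{Z_n})|=0$ in the remaining even case ($t\ge 3$ with $p_1=2$, or $t=2$, $p_1=2$, $r_1,r_2\ge 2$): this is precisely what separates the fourth line of the formula from the second and third, and you offer only the heuristic that ``the richer divisor lattice always supplies such a $z$'' without producing a witness for an arbitrary pair $(x,y)$ with $d(x,w)\neq d(y,w)$. That case analysis is the real content here and cannot be waved through; for instance, for $n=2^2\cdot 3^2$ one is successively forced to $o(x)=18$ and $o(y)=9$ by testing $z$ of orders $4$ and $12$, and only then does $z$ of order $6$ kill the pair --- the argument genuinely depends on which auxiliary orders exist. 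The minor gap: Proposition~\ref{samnbhd} characterizes equality of \emph{closed} neighborhoods only, whereas the paper's relation $\equiv$ also allows $N(x)=N(y)$ with $x\not\sim y$; to conclude $|\mathcal{U}(\mathcal{P}_{Z_n})|=\prod_{i=1}^{t}(r_i+1)-1$ you must also exclude such nonadjacent open twins (quick fix: if $o(x)>2$, any other generator $z$ of $\langle x\rangle$ lies in $N(x)$, so $N(x)=N(y)$ would force $z\sim y$ and hence $x\sim y$; and elements of order at most $2$ are $e$ and the unique involution, which are adjacent).
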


A dihedral group is presented as:
$$D_{2n}=\langle a,b| a^n=b^2=e, (ab)^2=e\rangle.$$
$D_{2n}$ is the disjoint union of the cyclic subgroup $
Z_{n}\cong \langle a\rangle=\{e, a, a^2, \ldots, a^{n-1}\}$, and the set of involutions
$B=\{b, ab, a^2b, \ldots, a^{n-1}b\}$.

\begin{Lemma}\label{dihinv}
Let $w\in B$, then, in the power graph $\mathcal{P}_{D_{2n}}$, the following are true:\\
\emph{(i)} $\overline{w}=B;$\\
 \emph{(ii)} $w$ is not a resolving involution.
\end{Lemma}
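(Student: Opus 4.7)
My plan is to nail down the neighbourhoods, in $\mathcal{P}_{D_{2n}}$, of the involutions lying in $B$, since both parts of the lemma reduce to short distance computations once these neighbourhoods are understood. The first step for part~(i) is to show that $N(a^{i}b)=\{e\}$ for every $i$. Because $(a^{i}b)^{2}=e$, the cyclic subgroup generated by $a^{i}b$ is $\{e,a^{i}b\}$, so the only neighbour produced by powers of $a^{i}b$ itself is $e$. Conversely, I must rule out any $x\neq a^{i}b$ with $x^{m}=a^{i}b$: if $x\in\langle a\rangle$ then $x^{m}\in\langle a\rangle$ and so cannot equal $a^{i}b\notin\langle a\rangle$, while if $x=a^{j}b$ then $x^{m}\in\{e,a^{j}b\}$, forcing $a^{j}b=a^{i}b$ and hence $x=a^{i}b$, a contradiction. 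This yields $N(a^{i}b)=\{e\}$ for each $i$, so the elements of $B$ share a common open neighbourhood and therefore $B\subseteq\overline{w}$.

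For the reverse inclusion (assuming $n\geq 3$, as is needed for the subsequent theorem), I exclude every element of $\langle a\rangle$ from $\overline{w}$. The identity $e$ is adjacent to every non-identity vertex (since $z^{\mathrm{ord}(z)}=e$), so $N[e]=V(\mathcal{P}_{D_{2n}})\neq\{e,w\}=N[w]$. For $a^{i}$ with $i\neq 0$, either $\mathrm{ord}(a^{i})>2$, in which case $a^{-i}\in N(a^{i})\setminus\{e\}$, or $i=n/2$ with $n$ even, in which case $a^{n/2}$ is a power of $a$ and so $a\in N(a^{n/2})\setminus\{e\}$. In either case $N(a^{i})\neq\{e\}=N(w)$, and since $w\notin N(a^{i})$ (as $e$ is the unique neighbour of $w$) we also have $N[a^{i}]\neq N[w]$. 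Thus $\overline{w}=B$, completing part~(i).

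For part~(ii), part~(i) forces the distance profile $d_{\mathcal{P}_{D_{2n}}}(w,e)=1$ and $d_{\mathcal{P}_{D_{2n}}}(w,z)=2$ for every $z\notin\{e,w\}$, because $e$ is adjacent to every other vertex. Suppose, toward contradiction, that there were $x,y\in V(\mathcal{P}_{D_{2n}})\setminus\overline{w}=\langle a\rangle$ with $R\{x,y\}=\{x,y,w\}$. If $x,y\in\langle a\rangle\setminus\{e\}$, then $d(w,x)=d(w,y)=2$, so $w\notin R\{x,y\}$, contradicting the assumption. Otherwise, without loss of generality $x=e$ and $y\in\langle a\rangle\setminus\{e\}$; but then $d(w',x)=1\neq 2=d(w',y)$ for \emph{every} $w'\in B$, so $B\subseteq R\{x,y\}$, and since $|B|=n\geq 3$ this strictly exceeds $\{x,y,w\}$. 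Either way we obtain a contradiction, so $w$ is not a resolving involution.

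The main obstacle is just careful bookkeeping: one has to isolate the identity $e$ and, when $n$ is even, the central involution $a^{n/2}$ when arguing that they are not twins of $w$ in part~(i). Once the open-neighbourhood computation $N(a^{i}b)=\{e\}$ is in hand, part~(ii) is immediate, because every involution in $B$ behaves like a pendant vertex attached to $e$, so its distance function assumes only the two values $1$ and $2$, collapsing the analysis of $R\{x,y\}$ into the two cases handled above.
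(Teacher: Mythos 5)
Your proof is correct and follows essentially the same route as the paper's: compute that $N(a^ib)=\{e\}$ for every $i$, conclude that $B$ is a single twin class, and then split the analysis of $R\{x,y\}$ for $x,y\in\langle a\rangle$ into the same two cases (one of $x,y$ equal to $e$, or neither). You simply supply details the paper leaves implicit --- in particular the reverse inclusion $\overline{w}\subseteq B$ and the explicit restriction to $n\geq 3$ (without which part (i) in fact fails, e.g.\ for $D_4$), which is a worthwhile clarification but not a different argument.
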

\proof The neighborhood, in the graph $\mathcal{P}_{D_{2n}}$, of every involution $w\in B$ is $\{e\}$. Therefore, $\overline{w}=B$. If $x,y\in V(\mathcal{P}_{D_{2n}})\setminus B$, then there are two possibilities:\\
1) $ x=a^{s} $, $ y=e$, $ 1\leq s \leq n-1$;\\
2) $ x=a^{s_1} $, $ y=a^{s_2} $, $ 1\leq s_1,s_2 \leq n-1$.\\
 In the above two cases, one can see that
$R\{x,y\}\neq \{x,y,w\}$. Therefore, $w\in B$ is not a resolving involution.
\endproof

\emph{\textbf{Proof of the Theorem\ref{mdDi}}:} By part (ii) of
Lemma~\ref{dihinv}, every resolving involution in $\mathcal{P}_{D_{2n}}$ belongs
to the subgraph $\mathcal{P}_{\langle a \rangle}$, corresponding to the cyclic subgroup $\langle a \rangle$, of $D_{2n}$. Therefore,
$W(\mathcal{P}_{D_{2n}})=W(\mathcal{P}_{\langle a \rangle})$. In the subgraph $\mathcal{P}_{\langle a \rangle}$, the
identity $e$ and the generator $a$ are twins. However, $e$
is the unique singleton twin in $\mathcal{P}_{D_{2n}}$. By part (i) of Lemma~\ref{dihinv}, all $w\in B$  are in the same twin class. Therefore, the
set $\mathcal{U}(\mathcal{P}_{D_{2n}})$ is the disjoint union of
$\mathcal{U}(\mathcal{P}_{\langle a \rangle})$; the twin class of $e$; and the twin class of $w,$
for $w\in B$. Consequently, $|\mathcal{U}(\mathcal{P}_{D_{2n}})|=|\mathcal{U}(\mathcal{P}_{\langle a \rangle})|+2.$
A dihedral group $D_{2n}$ does not satisfy the condition (C4);
therefore, $D_{2n}\notin \Psi$. Now, put  $| V(\mathcal{P}_{D_{2n}})|=| V(\mathcal{P}_{\langle a \rangle})|+n$; $|\mathcal{U}(\mathcal{P}_{D_{2n}})|=|\mathcal{U}(\mathcal{P}_{\langle a \rangle})|+2$; and $| W(\mathcal{P}_{D_{2n}})|=| W(\mathcal{P}_{\langle a \rangle})|$ in the equation of part (ii) of Theorem~\ref{mdpg} to complete the proof.\hfill
$\Box$

To compute the exact value of $\beta(\mathcal{P}_{D_{2n}})$, one can use Theorem~\ref{mdDi} and Corollary~\ref{mdcyc}.
\begin{Lemma}\label{sinin}
A singleton twin $x$, in the power graph $\mathcal{P}_{G}$, is either an involution or
the identity $e$ in the group $G$.
\end{Lemma}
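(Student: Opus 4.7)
The plan is to prove the contrapositive: if $x \in G$ is neither the identity nor an involution, then $x$ is not a singleton twin. So assume $x$ has order greater than $2$, which guarantees $x^{-1} \neq x$. I would show that $x \equiv x^{-1}$ in $\mathcal{P}_G$, and hence $\overline{x} \supseteq \{x, x^{-1}\}$.

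First I would record a useful reformulation of adjacency in a power graph: two distinct vertices $u, v$ are adjacent in $\mathcal{P}_G$ if and only if $\langle u\rangle \subseteq \langle v\rangle$ or $\langle v\rangle \subseteq \langle u\rangle$. Indeed, $v$ is a positive power of $u$ exactly when $v\in\langle u\rangle$, and inside the finite cyclic group $\langle u\rangle$ every element is a positive power of $u$; the analogous statement holds with the roles reversed. This makes the adjacency relation depend only on the cyclic subgroup generated by a vertex.

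Next I would use the elementary fact $\langle x\rangle = \langle x^{-1}\rangle$, which is immediate because $x^{-1}$ is a positive power of $x$ (namely $x^{n-1}$ where $n$ is the order of $x$) and symmetrically. Combined with the previous reformulation, for any $z \in V(\mathcal{P}_G)\setminus\{x, x^{-1}\}$ we get
\[
z \sim x \iff \langle z\rangle \subseteq \langle x\rangle \text{ or } \langle x\rangle \subseteq \langle z\rangle \iff \langle z\rangle \subseteq \langle x^{-1}\rangle \text{ or } \langle x^{-1}\rangle \subseteq \langle z\rangle \iff z \sim x^{-1}.
\]
Moreover $x \sim x^{-1}$ since $x^{-1} \in \langle x\rangle$. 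Therefore $N[x] = N[x^{-1}]$, so $x \equiv x^{-1}$, contradicting the assumption that $\overline{x} = \{x\}$.

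I do not expect a serious obstacle here: once the adjacency rule is rephrased in terms of containment of cyclic subgroups, the conclusion is essentially automatic. The only point that needs a brief verification is that the equivalence between positive powers and membership in the cyclic subgroup uses finiteness of $G$, and that the case $z = x^{-1}$ (which is excluded from the quantifier above) is handled separately by directly checking $x\sim x^{-1}$ so that the closed neighborhoods agree on $\{x, x^{-1}\}$ as well.
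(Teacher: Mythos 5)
Your proof is correct and follows essentially the same route as the paper: the paper's one-line argument also observes that $o(x)\geq 3$ forces $N[x]=N[x^{-1}]$, so $x$ and $x^{-1}$ are twins. You simply supply the justification (adjacency depends only on the generated cyclic subgroup, and $\langle x\rangle=\langle x^{-1}\rangle$) that the paper leaves implicit.
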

\proof If $x$ in $G$ is not an involution or $e$, then the order $o(x)$
is $\geq 3$ and $N[x]=N[x^{-1}],$ a contradiction.
\endproof
 \emph{\textbf{Proof of the Theorem\ref{Expro}}:} Let $G$ be a cyclic group of odd order, and $y$ is a generator of $G$. Then, there is no involution in the group $G$. Also, part (ii) of Proposition~\ref{samnbhd} implies that $\overline{y}=\overline{e}$, and $e$ is not a singleton twin.
  Therefore, by Lemma~\ref{sinin}, the graph $\mathcal{P}_{G}$ has no singleton twin. Hence, by Theorem~\ref{nosinex}, the exchange property holds in $\mathcal{P}_{G}$.\\
 Let $G$ be an abelian group of order $|G|=p^m$, for
some prime $p$, then $\mathcal{P}_{G}$ is a complete graph. Therefore, $\mathcal{P}_{G}$ has no singleton twin, and has the exchange property by Theorem~\ref{nosinex}. \\

The following example shows that the converse of Theorem \ref{nosinex} and Theorem \ref{Expro} is not true.
\begin{Example}[]
Let $\mathcal{P}_{Z_6}$ be the power graph, where $Z_6\cong \langle x
\rangle=\{e,x,x^{2},x^{3},x^{4},x^{5}\}$. Then, the order of the group is even and not a power of a prime. Furthermore, the power graph contains the singleton twin $x^3$. Still, the graph has the exchange property for resolving sets.
\end{Example}
We did not encounter a power graph of a finite group which
does not have the exchange property. Therefore, the following
question make sense to be posed.
\begin{Question}
Does there exist a finite group whose power graph does not hold the
exchange property?
\end{Question}

\end{document}